\newtheorem{theorem}{Theorem}[section]
\newtheorem{definition}{Definition}[section]
\newtheorem{corollary}{Corollary}[section]
\newtheorem{remark}{Remark}[section]
\newtheorem{example}{Example}[section]
\newcommand{\be}{\begin{equation}}
\newcommand{\ee}{\end{equation}}
\numberwithin{equation}{section}
\newcommand{\bea}{\begin{eqnarray}}
\newcommand{\eea}{\end{eqnarray}}
\newcommand{\beb}{\begin{eqnarray*}}
\newcommand{\eeb}{\end{eqnarray*}}
\begin{document}
\title{Rough statistical convergence of sequences in a partial metric space}
\author{Sukila khatun$^{1}$ and Amar Kumar Banerjee$^{2}$}
\address{$^{1}$Department of Mathematics, The University of Burdwan,
Golapbag, Burdwan-713104, West Bengal, India.} 
\address{$^{2}$Department of Mathematics, The University of Burdwan,
Golapbag, Burdwan-713104, West Bengal, India.}
\email{$^{1}$sukila610@gmail.com}
\email{$^{2}$akbanerjee@math.buruniv.ac.in, akbanerjee1971@gmail.com}

\begin{abstract}
In this paper, using the concept of natural density, we have introduced the notion of rough statistical convergence which is an extension of the notion of rough convergence in a partial metric space. We have defined the set of rough statistical limit points of a sequence in a partial metric space and proved that this set is closed and bounded. Finally, we have found out the relationship between the set of statistical cluster points and the set of rough statistical limit points of  sequences in a partial metric space.
\end{abstract}
\subjclass[2020]{40A05, 40A99.}
\keywords{Natural density, statistical convergence, rough convergence, partial metric spaces, rough limit sets.}
\maketitle
\section{\bf{Introduction and Preliminaries}}
The idea of rough convergence of sequences was first introduced in a normed linear space by H. X. Phu \cite{PHU} in 2001. The formal definition given by him is as follows: if $\{x_n\}$ is a sequence in a normed linear space $(X,||.||)$ and $r$ is a non-negative real number, then $\{x_n\}$ is said to be $r$-convergent to $x$, denoted by $x_{n} \stackrel{r} \longrightarrow x$, if $ \forall \varepsilon>0 $, $\exists \ i_\varepsilon \in \mathbb N : i \geq i_\varepsilon \longrightarrow ||x_n-x||<r+\varepsilon$, or equivalently, if $limsup_{n\to\infty} \ ||x_n-x||<r$. This is the idea of rough convergence with $r$ as roughness degree. For $r=0$ we have the ordinary convergence. The main difference of ordinary and rough convergence is that if a sequence in a normed linear space converges, then it's limit is unique but for rough convergence with roughness degree $r>0$ the limit may be infinite. The set of all $r$-limits is denoted by $LIM^{r}x_{n}$. Phu discussed about the idea of rough limit sets and also some basic properties of rough limit sets such as boundedness, closedness and convexity etc. Also he introduced the idea of rough Cauchy sequences. Later works on rough convergence in many directions were carried out by many authors  \cite{{AYTER1}, {RMROUGH}, {DR}, {NH}, {PMROUGH1}, {PMROUGH2}}.
We intended to study the idea of rough convergence in a more generalized form. So, it is required to discuss in brief the ideas of statistical convergence.
Statistical convergence is a generalization of the ordinary convergence. The concept of Statistical convergence was introduced by H. Fast \cite{HF} and H. Steinhaus \cite{HS} in the year 1951. The idea of statistical convergence has been depend on the structure of ideals of subsets of the natural numbers by T. Salat \cite{TS} as follows: if $ B \subset \mathbb{N}$, then $B_{n}$ will denote the set $ \{k \in B : k \leq n \}$ and $|B_{n}|$ stands for the cardinality of $B_{n}$. The natural density of $B$ is denoted by $d(B)$ and defined by $d(B)=lim_{n\to\infty} \frac{|B_{n}|}{n}$, if the limit exists. A real sequence $\{ \xi_n \}$ is said to be statistically convergent to $\xi$ if for every $\varepsilon>0$ the set $B(\varepsilon)= \{ k \in \mathbb{N}: |\xi_{n} - \xi| \geq \varepsilon \}$ has natural density zero. In this case, $\xi$ is called the statistical limit of the sequence $\{ \xi_n \}$ and we write $st-LIM^{r} \xi_{n}= \xi$. We have $d(B^c)=1-d(B)$, where $B^c=\mathbb{N} \setminus B$ is the complement of $B$. If $B_1 \subset B_2$, then $d(B_1) \leq d(B_2)$.
In 1994, partial metric spaces were introduced by S. Mattews \cite{MATW} as a generalization of metric spaces. If $(X,p)$ is a partial metric space, then the concept of self-distance $p(x,x)$ which is not necessarily zero for all $x \in X$. In our present work we discuss the idea of rough statistical convergence of sequences in partial metric spaces. We have given the idea of statistical boundedness in a partial metric space. We have also defined the set of rough statistical limit points and found out several properties of this set like boundedness and closedness etc. 

\begin{definition}\cite{BMW}
A partial metric on a nonempty set $X$ is a function $p: X\times X \longrightarrow [0, \infty)$ such that for all $x,y,z \in X$:\\
$(p1)$ $0 \leq p(x,x) \leq p(x,y)$ (nonnegativity and small self-distances),\\
$(p2)$ $x=y \Longleftrightarrow p(x,x)=p(x,y)=p(y,y)$ (indistancy both implies equality),\\
$(p3)$  $p(x,y)= p(y,x)$ (symmetry),\\
$(p4)$  $p(x,y) \leq p(x,z) + p(z,y) - p(z,z)$ (triangularity).\\
Then $(X,p)$ is said to be a partial metric space, where $X$ is a nonempty set and $p$ is a partial metric on $X$.

Properties and examples of partial metric spaces have been thoroughly discussed in \cite{BMW}.
\end{definition}

\begin{definition} \cite{BMW}
In a partial metric space  $(X,p)$, for $r>0$ and $x \in X$ we define the open and closed ball of radius $r$ and center $x$ respectively as follows  :
\begin{center}
    $B^{p}_{r}(x)=\{ y \in X : p(x,y)<p(x,x)+r  \}$ \\
$\overline{B^{p}_{r}}(x)=\{ y \in X : p(x,y) \leq p(x,x)+r \}.$ 
\end{center}
\end{definition}

\begin{definition} \cite{BMW}
Let $(X,p)$ be a partial metric space. A subset $U$ of $X$ is said to be a bounded in $X$ if there exists a positive real number $M$ such that $sup$ $\{ p(x,y): x,y \in U\}< M$.
\end{definition}

\begin{definition} \cite{BMW}
Let $(X,p)$ be a partial metric space and $\{x_{n}\}$ be a sequence in $X$. Then $\{x_{n}\}$ is said to converge to $x \in X$ if and only if $lim_{n\to\infty}p(x_{n},x)=p(x,x)$; i.e. if for each $\epsilon > 0$ there exists $k \in \mathbb{N}$ such that 
$|p(x_{n},x)-p(x,x)|< \epsilon$ for all $n \geq k$.
\end{definition}

\begin{definition} \cite{FN}
    Let $(X,p)$ be a partial metric space. Then the sequence $\{x_n\}$ is said to be statistically convergent to $x$ if for every $\varepsilon>0$, $d({n \in \mathbb N: |p(x_{n},x)-p(x,x)| \geq \varepsilon})=0$.
\end{definition}

\begin{definition} \cite{FN}
    Let $(X,p)$ be a partial metric space. Then the sequence $\{x_n\}$ is called statistically Cauchy if for every $\varepsilon>0$ there is a positive integer $m$ and $l \geq 0$ such that 
    \begin{center}
        $d({n \in \mathbb N: |p(x_{n},x_{m})-l| \geq \varepsilon})=0$.
    \end{center}
\end{definition}

\section{\bf{Rough statistical convergence in partial metric spaces}}

\begin{definition}
    A sequence $\{x_n\}$ in partial metric spaces $(X,p)$ is said to be rough statistical convergent (or in short r-statistical convergent or r-st convergent) to $x$ of roughness degree $r$ for some non-negative real number  $r$ if for every $\varepsilon>0$, $d(\{n \in \mathbb N : |p(x_n,x)-p(x,x) | \geq r+\varepsilon\})=0$.
\end{definition}
We denote it by the notation $x_{n} \stackrel{r-st}{\longrightarrow} x$ in $(X,p)$. When $r=0$, then the rough statistical convergent becomes the statistical convergent in partial metric spaces $(X,p)$. Let a sequence $\{x_n\}$ is rough statistical convergent to $x$, then $x$ is said to be a rough statistical limit point of $\{x_n\}$. And the set of all rough statistical limit points of a sequence $\{x_n\}$ is said to be the $r-st \ limit \ set$. We will denote it by $st-LIM^r x_n$. Hence $st-LIM^{r}x_{n}= \left\{x \in X : x_{n} \stackrel{r-st}{\longrightarrow} x \right\}$. 
\begin{theorem}
    Every rough convergent sequence in a partial metric space $(X,p)$ is rough statistically convergent in $(X,p)$.
\end{theorem}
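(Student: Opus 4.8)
The plan is to first make explicit the notion of rough convergence in a partial metric space, obtained by adapting Phu's definition (and the convergence notion of Definition 1.4) to the partial metric setting: a sequence $\{x_n\}$ in $(X,p)$ is rough convergent to $x$ with roughness degree $r$ if for every $\varepsilon>0$ there exists $k\in\mathbb N$ such that $|p(x_n,x)-p(x,x)|<r+\varepsilon$ for all $n\ge k$. With this in hand, the argument is the standard ``finite sets have natural density zero'' reduction.

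First I would fix an arbitrary $\varepsilon>0$ and, invoking the hypothesis of rough convergence to $x$, extract an index $k=k_\varepsilon\in\mathbb N$ such that $|p(x_n,x)-p(x,x)|<r+\varepsilon$ whenever $n\ge k$. Next I would consider the set $A(\varepsilon)=\{n\in\mathbb N:|p(x_n,x)-p(x,x)|\ge r+\varepsilon\}$ and observe that, by the choice of $k$, every element of $A(\varepsilon)$ is smaller than $k$, so $A(\varepsilon)\subseteq\{1,2,\dots,k-1\}$ is a finite set. Then I would estimate $\frac{|A(\varepsilon)_n|}{n}\le\frac{k-1}{n}$ for all $n$, which tends to $0$ as $n\to\infty$, giving $d(A(\varepsilon))=0$. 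Since $\varepsilon>0$ was arbitrary, this is exactly the definition of $x_n\stackrel{r\text{-}st}{\longrightarrow}x$ in $(X,p)$, and I would note in passing that the roughness degree $r$ is preserved, so in fact $LIM^{r}x_n\subseteq st\text{-}LIM^{r}x_n$.

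I do not expect any genuine obstacle here; the only point requiring a little care is purely expository, namely stating the definition of rough convergence in a partial metric space in a way consistent with Definition 1.4 (using $|p(x_n,x)-p(x,x)|$ rather than a norm), so that the set appearing in the rough-convergence hypothesis matches the set appearing in Definition 2.1. I might also remark that the converse fails, since a density-zero ``bad set'' need not be finite, but that is outside the scope of the present statement.
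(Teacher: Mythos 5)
Your proposal is correct and follows essentially the same route as the paper: fix $\varepsilon>0$, use rough convergence to find $k$ with $|p(x_n,x)-p(x,x)|<r+\varepsilon$ for $n\ge k$, note the exceptional set is contained in $\{1,\dots,k-1\}$ and hence has natural density zero. Your explicit statement of the rough-convergence definition and the density estimate $|A(\varepsilon)_n|/n\le (k-1)/n$ only makes explicit what the paper leaves implicit.
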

\begin{proof}
    Let $\{x_n\}$ be a sequence in $(X,p)$ and rough convergent to $x$.
    Then for an arbitrary $\varepsilon>0$, there exists $k \in \mathbb N$ such that $ |p(x_n,x)-p(x,x)| < r+\varepsilon$, $\forall n \geq k$.
    Now, the set $A=\{n \in \mathbb N : |p(x_n,x)-p(x,x) | \geq r+\varepsilon\} \subset \{1,2,3,......,(k-1) \}$ is a finite set.
    So, $d(A)=0$.
    Hence $\{x_n\}$ is rough statistically convergent in $(X,p)$.
    
\end{proof}

\begin{remark}
    The converse of the above theorem (2.1) may not be true i.e. rough statistically convergent sequence may not be rough convergent in $(X,p)$.
\end{remark}

\begin{example}
  Let $X=\mathbb R^+$ and $p: X\times X \longrightarrow \mathbb R^+$ be given by $p(x,y)=max \{x,y \}$ for all $x,y \in X$. Then $(X,p)$ is a partial metric space.
  Let us took a sequence $\{x_{n}\}$ which is defined by 
  \begin{equation*}
    \  x_{n}= \begin{cases}
          k, & \text { if $n=k^2$ }, \\
          2, & \text { if $n \neq k^2$ and $n$ is even}, \\
          0, & \text { if $n \neq k^2$ and $n$ is odd. }
                 \end{cases}
  \end{equation*}
 Let $\varepsilon>0$ be given. \\
 Then $A_1=\{n \in \mathbb N : |p(x_n,0)-p(0,0) | \geq 1+\varepsilon\} \subset \{1^2,2^2,3^2,4^2,5^2.....\}=P$ (say).\\
 Similarly, the sets
  $A_2=\{n \in \mathbb N : |p(x_n,1)-p(1,1) | \geq 1+\varepsilon\} \subset P$
 and $A_3=\{n \in \mathbb N : |p(x_n,2)-p(2,2) | \geq 1+\varepsilon\} \subset P$ .\\
 Since $d(P)=0$, so $d(A_1)=d(A_2)=d(A_3)=0$.\\
 So, $\{x_{n}\}$ is rough statistical convergent to 0,1 and 2 of roughness degree 1.\\
 Again, for any $k$ such that $1<k<2$, the set $A_k=\{n \in \mathbb N : |p(x_n,k)-p(k,k) | \geq 1+\varepsilon\} \subset \{1^2,2^2,3^2,.....\}=P$.\\
 If $K>2$, then the set $A_k \subset P$.
 So, the statistical rough limit set of roughness degree 1 is $ \{0 \} \cup \{k \in \mathbb R^+ : k \geq 1 \}$ i.e. $st-LIM^r x_n= \{0 \} \cup [1,\infty)$.
 But \begin{equation*}
     |p(x_n,2)-p(2,2)|= \begin{cases}
         |p(k,2)-p(2,2)|=|k-2|, & \text{if $n=k^2$},\\
         |p(2,2)-p(2,2)|=0, & \text{if $n \neq k^2$ and $n$ is even}, \\
         |p(0,2)-p(2,2)|=0, & \text{if $n \neq k^2$ and $n$ is odd}.
     \end{cases}
 \end{equation*}
 So, when $n=k^2$, there dose not exist any positive integer $n_0$ such that the condition $|p(x_n,2)-p(2,2)|< r+ \varepsilon$ for all $n \geq n_0$ holds, since $|k-2| \longrightarrow \infty$ as $k^2 \longrightarrow \infty$.\\
 Hence $\{x_{n}\}$ is not rough convergent to 2 of any roughness degree $r>0$.
 Similarly, it can be shown that  $\{x_{n}\}$ is not rough convergent 0 or any number in $[1, \infty)$ i.e. $LIM^r x_n= \phi$.
\end{example}

\begin{definition} \cite{SUK2}
    The diameter of a set $B$ in a partial metric space $(X,p)$ is defined by 
    \begin{center}
    $diam(B)$ = $sup$ $\{ p(x,y) : x, y\in B\}$.
\end{center}
\end{definition}

\begin{theorem}
Let $(X,p)$ be a partial metric space and $a$ be a positive real number such that $p(x,x)=a$ for all $x$ in $X$. Then for a sequence $\{x_{n}\}$, we have $diam(st-LIM^rx_n) \leq (2r+2a)$. 
\end{theorem}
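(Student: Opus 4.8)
The plan is to estimate $p(y,z)$ for an arbitrary pair $y,z \in st-LIM^{r}x_{n}$ and then pass to the supremum. If $st-LIM^{r}x_{n}$ has at most one point the inequality is immediate (for a singleton $\{y\}$ one has $diam(\{y\})=p(y,y)=a\leq 2r+2a$ by the hypothesis on self-distances), so I would assume that $y$ and $z$ are two arbitrary points of the $r$-st limit set.

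First I would unwind the definition of rough statistical convergence. Since $y,z \in st-LIM^{r}x_{n}$ and $p(y,y)=p(z,z)=a$, for a fixed $\varepsilon>0$ the two index sets $A_{1}=\{n\in\mathbb{N}:|p(x_{n},y)-a|\geq r+\varepsilon\}$ and $A_{2}=\{n\in\mathbb{N}:|p(x_{n},z)-a|\geq r+\varepsilon\}$ both have natural density $0$. From $|(A_{1}\cup A_{2})_{n}|\leq|(A_{1})_{n}|+|(A_{2})_{n}|$ it follows that $d(A_{1}\cup A_{2})=0$, and then the relation $d(B^{c})=1-d(B)$ from the preliminaries gives $d\big((A_{1}\cup A_{2})^{c}\big)=1$; in particular this complement is nonempty, so I may pick an index $m\notin A_{1}\cup A_{2}$.

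For such an $m$ we have $p(x_{m},y)<a+r+\varepsilon$ and $p(x_{m},z)<a+r+\varepsilon$. Applying the triangularity axiom $(p4)$ with the intermediate point $x_{m}$, and using $p(x_{m},x_{m})=a$,
\[
p(y,z)\leq p(y,x_{m})+p(x_{m},z)-p(x_{m},x_{m})<(a+r+\varepsilon)+(a+r+\varepsilon)-a=a+2r+2\varepsilon .
\]
Since $\varepsilon>0$ is arbitrary, $p(y,z)\leq a+2r\leq 2a+2r$. As $y,z$ were arbitrary, and the diagonal contributes only $p(y,y)=a\leq 2r+2a$, taking the supremum over all pairs from $st-LIM^{r}x_{n}$ yields $diam(st-LIM^{r}x_{n})\leq 2r+2a$.

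I do not anticipate a genuine obstacle. The one point needing a little care is the density bookkeeping, namely that the two exceptional index sets have density $0$, hence so does their union, so its complement actually meets $\mathbb{N}$; once a common ``good'' index $m$ is produced, the partial-metric triangle inequality does the rest. It is worth noting that this argument in fact gives the sharper estimate $diam(st-LIM^{r}x_{n})\leq 2r+a$, and the stated bound $2r+2a$ follows a fortiori (e.g.\ by discarding the nonpositive term $-p(x_{m},x_{m})$ rather than substituting its value).
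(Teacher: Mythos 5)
Your argument is correct and is essentially the paper's proof in direct form: the paper argues by contradiction, choosing $\varepsilon\in(0,\tfrac{p(y,z)}{2}-r-a)$ and a common index in $M_1^{c}\cap M_2^{c}$ (density $1$), then applies the same triangularity step $p(y,z)\leq p(y,x_n)+p(x_n,z)-p(x_n,x_n)$ with all self-distances equal to $a$, exactly as you do with your index $m\notin A_1\cup A_2$. Your observation that the computation actually yields the sharper bound $2r+a$ is accurate, and the same sharper constant is implicit in the paper's own chain of inequalities (which reaches $2r+2\varepsilon+a$), even though the theorem is stated with $2r+2a$.
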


\begin{proof}
Let $diam(st-LIM^{r} x_{n}) > 2r+2a$.
Then there exist elements $y,z \in st-LIM^{r} x_{n}$ such that $p(y,z)>2r+2a$.
Let us take $\varepsilon \in (0,\frac{p(y,z)}{2}-r-a)$. 
Since $y,z \in st-LIM^{r} x_{n}$, we have $d(M_1)=0$ and $d(M_2)=0$,  where 
$ M_1= \{ n \in \mathbb N: |p(x_{n},y)-p(y,y)| \geq r+\varepsilon \}$ and
$ M_2= \{ n \in \mathbb N:|p(x_{n},z)-p(z,z)|\geq r+\varepsilon \}$.
Using the property of natural density, we get $d(M^{c}_1 \cap M^{c}_2)=1$.
So, for all $n \in M^{c}_1 \cap M^{c}_2 $, we have 
 \begin{equation*}
       \begin{split}
p(y,z) &\leq p(y,x_{n})+p(x_{n},z)-p(x_{n},x_{n})\\
       &=\{p(x_{n},y)-p(y,y)\}+\{p(x_{n},z)-p(z,z)\}-p(x_{n},x_{n})+p(y,y)+p(z,z)\\
       &< 2(r+\varepsilon)-a+a+a\\
       &=2r+2\varepsilon+a\\
       &<2r+ p(y,z)-2r-2a+a\\
       &=p(y,z)-a , \ \text{which is a contradiction}.    
       \end{split}
   \end{equation*}    
Hence we must have $diam(st-LIM^{r} x_{n}) \leq 2r+2a$. 
\end{proof}

\begin{theorem}
    If a sequence $\{x_{n}\}$ statistically converges to $x$ in a partial metric space $(X,p)$, then $\{ y \in \overline{B^{p}_{r}}(x):p(x,x)=p(y,y)\} \subseteq st-LIM^{r}x_{n}$.
\end{theorem}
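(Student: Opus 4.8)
The plan is to fix an arbitrary $y$ belonging to the set $\{y \in \overline{B^{p}_{r}}(x) : p(x,x)=p(y,y)\}$ and to verify directly that $x_{n} \stackrel{r-st}{\longrightarrow} y$; that is, that for every $\varepsilon>0$ the set $\{n \in \mathbb N : |p(x_{n},y)-p(y,y)| \geq r+\varepsilon\}$ has natural density zero. The only properties of $y$ that will be needed are the two defining relations $p(x,y) \leq p(x,x)+r$ (membership in the closed ball) and $p(y,y)=p(x,x)$.

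First I would fix $\varepsilon>0$ and use the hypothesis that $\{x_{n}\}$ statistically converges to $x$: this says that $A^{c} := \{n \in \mathbb N : |p(x_{n},x)-p(x,x)| \geq \varepsilon\}$ has density zero, hence $A := \{n \in \mathbb N : |p(x_{n},x)-p(x,x)| < \varepsilon\}$ has density one. The core of the argument is then a two-sided estimate of $|p(x_{n},y)-p(y,y)|$ valid for every $n \in A$. For the upper estimate I would apply the triangle inequality $(p4)$ with $x$ as the middle point,
\[
p(x_{n},y) \leq p(x_{n},x) + p(x,y) - p(x,x),
\]
subtract $p(y,y)$, and use $p(y,y)=p(x,x)$ together with $p(x,y)-p(x,x) \leq r$ to obtain
\[
p(x_{n},y)-p(y,y) \leq \bigl(p(x_{n},x)-p(x,x)\bigr) + \bigl(p(x,y)-p(x,x)\bigr) < \varepsilon + r
\]
for all $n \in A$. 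For the lower estimate I would invoke the small self-distance axiom $(p1)$ (together with symmetry $(p3)$), which gives $p(y,y) \leq p(x_{n},y)$, hence $p(x_{n},y)-p(y,y) \geq 0$. Combining the two yields $|p(x_{n},y)-p(y,y)| < r+\varepsilon$ for every $n \in A$.

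It then follows that $\{n \in \mathbb N : |p(x_{n},y)-p(y,y)| \geq r+\varepsilon\} \subseteq A^{c}$, and since $d(A^{c})=0$ and $d(B_{1}) \leq d(B_{2})$ whenever $B_{1} \subseteq B_{2}$, this set has density zero. As $\varepsilon>0$ was arbitrary, $y \in st-LIM^{r}x_{n}$, which establishes the claimed inclusion. I do not expect any genuine obstacle here: the argument is a routine density computation, and the only delicate point is the bookkeeping of the self-distance terms — the hypothesis $p(x,x)=p(y,y)$ is exactly what makes the $p(x,x)$ and $p(y,y)$ contributions cancel in the triangle-inequality estimate, while $(p1)$ is what keeps $|p(x_{n},y)-p(y,y)|$ from being large on the negative side (in a genuine metric space this would be automatic, but in a partial metric space it must be recorded explicitly).
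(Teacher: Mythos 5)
Your proposal is correct and follows essentially the same route as the paper's proof: the triangle inequality $(p4)$ through $x$, cancellation of self-distances via $p(x,x)=p(y,y)$, the axiom $(p1)$ to control the sign of $p(x_{n},y)-p(y,y)$, and the inclusion of the exceptional set in the density-zero set coming from statistical convergence. The only difference is the cosmetic swap of the roles of $A$ and $A^{c}$ in the notation.
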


\begin{proof}
 Let $\varepsilon >0$ and a sequence $\{x_{n}\}$ statistically converges to $x$ in a partial metric space $(X,p)$.
 Then $d(A)=0$, where $A=\{n \in \mathbb N: |p(x_{n},x)-p(x,x)| \geq \varepsilon \}$.
 Let $ y \in \overline{B^{p}_{r}}(x)$ such that $p(x,x)=p(y,y) $. Then $p(x,y) \leq p(x,x)+r$ such that $p(x,x)=p(y,y)$........(1). \\
 For $n \in A^c$,
 \begin{equation*}
    \begin{split}
        p(x_{n},y) &\leq p(x_{n},x)+p(x,y)-p(x,x)\\
             &\leq \{p(x_{n},x)-p(x,x)\}+p(x,y)\\
             &< \epsilon + \{p(x,x)+r\} \ \text{by} \ (1)  \\
             &= p(x,x)+(r+\epsilon) 
    \end{split}
\end{equation*}
 Therefore, 
\begin{equation*}
    \begin{split}
p(x_{n},y)-p(y,y) & <p(x,x)-p(y,y)+(r+\epsilon)\\
                  & =(r+\epsilon), \ \ \text{since} \ p(x,x)= p(y,y).
    \end{split}
\end{equation*}
So, by (p1) axiom, $|p(x_{n},y)-p(y,y)|=p(x_{n},y)-p(y,y) < (r+\epsilon)$, for every $n\in A^c$. So, $\{ n \in \mathbb N: |p(x_{n},y)-p(y,y)| \geq r+\varepsilon \} \subset A$ and hence $d( \{ n \in \mathbb N : |p(x_{n},y)-p(y,y)| \geq r+\varepsilon \} )=0$. So $y \in st-LIM^{r}x_{n}$.
\end{proof}

\begin{theorem}
   Let $\{x_{n}\}$ be a $r$-statistical convergent sequence in $(X,p)$ and $\{y_{n}\}$ be a convergent sequence in $st-LIM^{r}x_{n}$ converging to $y$. Then $y$ must belongs to $st-LIM^{r}x_{n}$.
\end{theorem}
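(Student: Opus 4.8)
The plan is to show that the rough statistical limit set $st-LIM^{r}x_{n}$ is closed; concretely, I must verify that the limit $y$ of the given sequence $\{y_{n}\}\subseteq st-LIM^{r}x_{n}$ again satisfies $x_{n}\stackrel{r-st}{\longrightarrow}y$. So fix an arbitrary $\varepsilon>0$; the target is to prove that $d\big(\{n\in\mathbb N:|p(x_{n},y)-p(y,y)|\geq r+\varepsilon\}\big)=0$.

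Two ingredients are needed. First, from $y_{m}\to y$ in $(X,p)$ (Definition 1.4) we have $\lim_{m\to\infty}p(y_{m},y)=p(y,y)$, and axiom $(p1)$ together with $(p3)$ gives $p(y,y)\leq p(y,y_{m})=p(y_{m},y)$, so that $p(y_{m},y)-p(y,y)$ stays nonnegative and tends to $0$. Hence I may fix one index $k$, depending only on $\varepsilon$, with $p(y_{k},y)-p(y,y)<\varepsilon/2$; it is important that $k$ is chosen here, before any density estimate is made. Second, since this fixed $y_{k}$ lies in $st-LIM^{r}x_{n}$, Definition 2.1 applied with tolerance $\varepsilon/2$ yields $d(B)=0$, where $B=\{n\in\mathbb N:|p(x_{n},y_{k})-p(y_{k},y_{k})|\geq r+\varepsilon/2\}$, i.e. $d(B^{c})=1$; and for $n\in B^{c}$ axiom $(p1)$ lets me drop the modulus, giving $p(x_{n},y_{k})-p(y_{k},y_{k})<r+\varepsilon/2$.

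These are combined through the triangularity axiom $(p4)$ with middle point $y_{k}$: for every $n\in B^{c}$,
\[
p(x_{n},y)-p(y,y)\ \leq\ \big[p(x_{n},y_{k})-p(y_{k},y_{k})\big]+\big[p(y_{k},y)-p(y,y)\big]\ <\ \big(r+\varepsilon/2\big)+\varepsilon/2\ =\ r+\varepsilon ,
\]
while $(p1)$ again gives $p(x_{n},y)-p(y,y)\geq 0$, so $|p(x_{n},y)-p(y,y)|<r+\varepsilon$ for all $n\in B^{c}$. Consequently $\{n\in\mathbb N:|p(x_{n},y)-p(y,y)|\geq r+\varepsilon\}\subseteq B$, a set of natural density zero. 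As $\varepsilon>0$ was arbitrary, $y\in st-LIM^{r}x_{n}$, which is the claim.

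The argument is essentially routine, so there is no serious obstacle; the only points requiring attention are (i) using the small-self-distance axiom $(p1)$ at exactly the right places to remove the absolute values, and (ii) observing that the correction term $-p(y_{k},y_{k})$ in $(p4)$ telescopes against $p(x_{n},y_{k})$ so that the auxiliary point's self-distance cancels — this is why, in contrast to Theorem 2.2, no hypothesis that all self-distances are equal is needed here. The one place to be genuinely careful is the order of quantifiers: the index $k$ must be a function of $\varepsilon$ alone and must be fixed before the natural-density set $B$ is introduced.
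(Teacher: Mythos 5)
Your argument is correct and is essentially the paper's own proof: fix $\varepsilon>0$, choose one term $y_{k}$ of the sequence with $p(y_{k},y)-p(y,y)<\varepsilon/2$, use $y_{k}\in st\text{-}LIM^{r}x_{n}$ with tolerance $\varepsilon/2$, and combine via axiom $(p4)$ (with $(p1)$ to drop the absolute values) to get $\{n\in\mathbb N:|p(x_{n},y)-p(y,y)|\geq r+\varepsilon\}\subseteq B$, a density-zero set. The only differences are cosmetic (the paper keeps the modulus $|p(y_{n_0},y)-p(y,y)|$ rather than invoking nonnegativity, and phrases the final step as an inclusion of complements), so nothing further is needed.
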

\begin{proof}
If $st-LIM^{r}x_{n}=\phi$, then there is nothing to prove.
So, we can assume that $st-LIM^{r}x_{n} \neq \phi$.
Let $\{y_{n}\}$ be a sequence in $st-LIM^{r}x_{n}$ such that $\{y_{n}\} {\longrightarrow} y$.
Let $\varepsilon>0$ be given. Since $\{y_{n}\} {\longrightarrow} y$, there exists $n_\frac{\varepsilon}{2} \in \mathbb N$ such that $|p(y_n,y)-p(y,y)| < \frac{\varepsilon}{2}$ for all $n \geq n_\frac{\varepsilon}{2}$.
Now, choose an $n_0 \in \mathbb N$ such that $n_0 > n_\frac{\varepsilon}{2}$.
Then we can write $|p(y_{n_0},y)-p(y,y)| < \frac{\varepsilon}{2}$.
On the other hand, because $\{y_{n}\} \subset st-LIM^{r}x_{n}$, we have $y_{n_0} \in st-LIM^{r}x_{n} $ and $d(\{n \in \mathbb N: |p(x_n,y_{n_0})-p(y_{n_0},y_{n_0})| \geq r+ \frac{\varepsilon}{2}\})=0$..........(1)\\
Now, we show \\
$ \{n \in \mathbb N :|p(x_n,y)-p(y,y)|<r+\varepsilon \} \supseteq \{ n \in \mathbb N: |p(x_n,y_{n_0})-p(y_{n_0},y_{n_0})| < r+ \frac{\varepsilon}{2} \}$.......(2). \\
Let $ k \in \{ n \in \mathbb N: |p(x_n,y_{n_0})-p(y_{n_0},y_{n_0})| < r+ \frac{\varepsilon}{2} \} $. \\
Then we have $|p(x_k,y_{n_0})-p(y_{n_0},y_{n_0})| < r+ \frac{\varepsilon}{2}$ and hence 
\begin{equation*}
    \begin{split}
         p(x_{k},y)-p(y,y) & \leq p(x_{k},y_{n_0})+p(y_{n_0},y)-p(y_{n_0},y_{n_0})-p(y,y) \\
         & \leq | p(x_{k},y_{n_0})-p(y_{n_0},y_{n_0})+p(y_{n_0},y)-p(y,y)| \\
         & \leq |p(x_{k},y_{n_0})-p(y_{n_0},y_{n_0})| + |p(y_{n_0},y)-p(y,y)| \\
         & < (r+\frac{\varepsilon}{2}) + \frac{\varepsilon}{2} \\
         & = r+\varepsilon
    \end{split}
\end{equation*}
So, by (p1) axiom, $|p(x_{k},y)-p(y,y)|<r+\varepsilon$. Therefore, $ k \in \{ n \in \mathbb N: |p(x_{n},y)-p(y,y)|<r+\varepsilon \} $, which proves (2).
From(1), we can say that the set on the right-hand side of (2) has natural density 1 and so, the natural density of the set on the left-hand side of (2) is equal to 1. Hence 
$d(\{n \in \mathbb N :|p(x_n,y)-p(y,y)| \geq r+\varepsilon \})=0$.
This proves that $y \in st-LIM^{r}x_{n}$. 
\end{proof} 

\begin{corollary}
Let $\{x_{n}\}$ be a $r$-statistical convergent sequence in a partial metric space $(X,p)$. Then $st-LIM^{r}x_{n}$ is a closed set for any degree of roughness $r \geq 0$.
\end{corollary}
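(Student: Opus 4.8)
The plan is to obtain closedness as an immediate consequence of Theorem 2.5, which already encodes the essential sequential-closure property; only a short topological wrapper is needed. First I would recall that the topology on a partial metric space $(X,p)$ is the one generated by the open balls $B^{p}_{r}(x)$, and that this topology is first countable, since the balls $B^{p}_{1/k}(x)$, $k \in \mathbb{N}$, form a countable local base at each point $x$. Hence a subset of $X$ is closed if and only if it is sequentially closed, i.e.\ contains the limit of every convergent sequence of its own points, where convergence is the one of Definition 1.4.

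Next I would dispose of the trivial case: if $st-LIM^{r}x_{n} = \phi$, there is nothing to prove, so assume $st-LIM^{r}x_{n} \neq \phi$. Let $y$ be an arbitrary point in the closure of $st-LIM^{r}x_{n}$ in $(X,p)$. By first countability, I can pick a sequence $\{y_{n}\}$ with $y_{n} \in st-LIM^{r}x_{n}$ for every $n$ and $y_{n} \longrightarrow y$.

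Now I would invoke Theorem 2.5 directly: by hypothesis $\{x_{n}\}$ is $r$-statistically convergent, and $\{y_{n}\}$ is a convergent sequence lying entirely in $st-LIM^{r}x_{n}$ with limit $y$; therefore Theorem 2.5 gives $y \in st-LIM^{r}x_{n}$. Since $y$ was an arbitrary element of the closure, $st-LIM^{r}x_{n}$ coincides with its closure, so it is closed; as $r \geq 0$ was arbitrary, this holds for every degree of roughness.

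I do not anticipate a real obstacle here. The only point deserving a line of justification is the reduction from topological closedness to sequential closedness, which relies on first countability of the partial metric topology; all the analytic content—chaining the partial-metric triangle inequality $(p4)$ and the $(p1)$ axiom to move from a limit point $y_{n_0}$ of the approximating sequence to $y$ itself—was already carried out in the proof of Theorem 2.5, so the corollary is in effect just that theorem phrased in the language of closed sets.
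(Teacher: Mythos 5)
Your proposal is correct and is essentially the paper's own argument: the paper likewise reduces closedness to sequential closedness via first countability of the partial metric topology (citing \cite{SUK2}) and then applies the preceding theorem on convergent sequences in $st-LIM^{r}x_{n}$. The only difference is that you spell out the countable local base and the closure-point argument explicitly, which the paper leaves implicit.
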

\begin{proof}
 Since the partial metric space $(X,p)$ is first countable \cite{SUK2}, the result follows directly from theorem (2.4).   
\end{proof}

\begin{definition} (cf \cite{AKB})
   A sequence $\{x_{n}\}$ in a partial metric space $(X,p)$ is said to be statistically bounded if for any fixed $u \in X$ there exists a positive real number $M$ such that 
   \begin{center}
       $d(\{ n \in \mathbb N : p(x_n,u) \geq M \})=0$.
   \end{center} 
\end{definition}

\begin{remark}
    In \cite{FN}, the definition of boundedness of a sequence in a partial metric space is given as follows: \\
    $\{x_{n}\}$  is bounded if there exist $M>0$ such that $p(x_n,x_m) \leq M$, $\forall \ n,m \in \mathbb N $. \\
    This definition is equivalent to the definition (2.3).
    
\end{remark}

\begin{theorem}
Let $(X,p)$ be a partial metric space and $a$ be a positive real number such that $p(x,x)=a$, $ \forall x \in X$. Then a sequence $\{x_{n}\}$ is statistically bounded in $(X,p)$ if and only if there exists a non-negative real number $r$ such that $st-LIM^{r}x_{n} \neq \phi$.     
\end{theorem}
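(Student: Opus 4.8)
The plan is to prove the two implications directly from the definitions, using the hypothesis $p(x,x)=a$ for every $x$ together with axioms (p1) and (p4); the constant self-distance is what lets every modulus sign be removed and keeps the triangle estimate from deteriorating.

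For the ``only if'' part, assume $\{x_n\}$ is statistically bounded and fix $u\in X$, so there is $M>0$ with $d(\{n\in\mathbb N:p(x_n,u)\geq M\})=0$. By (p1) we always have $p(x_n,u)\geq p(u,u)=a$, so if $M\leq a$ that set would be all of $\mathbb N$; hence necessarily $M>a$ (or, if one prefers, replace $M$ by $\max\{M,a+1\}$, which only shrinks the set). Put $r:=M-a>0$. I would then check $u\in st\text{-}LIM^{r}x_n$: for any $\varepsilon>0$, since $|p(x_n,u)-p(u,u)|=p(x_n,u)-a$, the inequality $|p(x_n,u)-p(u,u)|\geq r+\varepsilon$ gives $p(x_n,u)\geq a+r+\varepsilon=M+\varepsilon\geq M$, so $\{n:|p(x_n,u)-p(u,u)|\geq r+\varepsilon\}\subseteq\{n:p(x_n,u)\geq M\}$, which has density zero. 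Thus $st\text{-}LIM^{r}x_n\neq\phi$.

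For the ``if'' part, assume $st\text{-}LIM^{r}x_n\neq\phi$ for some $r\geq 0$ and pick $x$ in it. Taking $\varepsilon=1$ in the definition, the set $B:=\{n\in\mathbb N:|p(x_n,x)-p(x,x)|\geq r+1\}$ has density zero, so $d(B^{c})=1$, and by (p1), for every $n\in B^{c}$ we have $p(x_n,x)<a+r+1$. Now fix an arbitrary $u\in X$ and apply (p4): $p(x_n,u)\leq p(x_n,x)+p(x,u)-p(x,x)=p(x_n,x)+p(x,u)-a$, so for $n\in B^{c}$ we get $p(x_n,u)<r+1+p(x,u)=:M$. Hence $\{n\in\mathbb N:p(x_n,u)\geq M\}\subseteq B$ has density zero, i.e.\ $\{x_n\}$ is statistically bounded.

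The whole argument is essentially bookkeeping of constants; the only genuine point of care is the repeated appeal to (p1) to drop the absolute values (legitimate precisely because $a=p(x,x)\leq p(x_n,x)$) and to (p4) in the form with the correction term $-p(z,z)$, which is exactly where constancy of the self-distance is used. I therefore do not expect a real obstacle. As an alternative, the ``if'' direction could also be obtained more conceptually by mimicking the classical fact that a statistically convergent sequence (here of roughness $r$) is statistically bounded, but the direct estimate above is shorter and self-contained.
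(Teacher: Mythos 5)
Your proof is correct and follows essentially the same direct, definition-chasing route as the paper: in the forward direction you exhibit the fixed point $u$ itself as a rough statistical limit (with the sharper choice $r=M-a$, the paper using the looser $r=M+a$), and in the converse you bound the sequence statistically around the chosen rough limit with $M$ built from $r$ plus a constant. The only noteworthy difference is that your converse also transfers the bound to an arbitrary center $u$ via (p4), which treats the universally quantified form of the statistical boundedness definition more carefully than the paper's proof, which verifies the bound only at the limit point itself.
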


\begin{proof}
Let $u \in X$ be a fixed element in $X$.
Since the sequence $\{x_{n}\}$ is statistically bounded, there exists a positive real number $M$ such that $d(\{ n \in \mathbb N : p(x_n,u) \geq M \})=0$.
Let $\varepsilon>0$ be arbitrary and $r=M+a$.
Now, we show that
\begin{center}
    $\{ n \in \mathbb N : |p(x_n,u)-p(u,u) < r+\varepsilon \} \supset \{ n \in \mathbb N : p(x_n,u) < M \}$ ..........(1).
\end{center}
Let $i \in \{ n \in \mathbb N : p(x_n,u) < M \}$.
Then $p(x_i,u) < M$.
Now, 
$p(x_i,u)-p(u,u) \leq |p(x_i,u)-p(u,u)| \leq |p(x_i,u)|+|p(u,u)| < M+a=r <r+\varepsilon $.
So, $ i \in \{ n \in \mathbb N : |p(x_n,u)-p(u,u)| < r+\varepsilon \} $. So, (1) holds and hence 
$\{ n \in \mathbb N : |p(x_n,u)-p(u,u) \geq r+\varepsilon \} \subset \{ n \in \mathbb N : p(x_n,u) \geq M \}$. This implies that $d(\{ n \in \mathbb N : |p(x_n,u)-p(u,u) \geq r+\varepsilon \})=0$. Therefor $ u \in st-LIM^{r}x_{n} $ i.e.
$st-LIM^{r}x_{n} \neq \phi$. \\

Conversely, suppose that $st-LIM^{r}x_{n} \neq \phi$.
So, let $u$ be a $r$-limit of $\{x_{n}\}$. Therefore, for $\varepsilon >0$, 
$d(\{ n \in \mathbb N : |p(x_n,u)-p(u,u) \geq r+\varepsilon \})=0$.
Let $K=\{ n \in \mathbb N : |p(x_n,u)-p(u,u) \geq r+\varepsilon \}$ and $ M=r+a+2\varepsilon$. Then $d(K)=0$ and if $ n \in K^c$, then 
\begin{equation*}
    \begin{split}
     p(x_n,u) & = |p(x_n,u)-p(u,u)+p(u,u)|  \\
              & \leq |p(x_n,u)-p(u,u)|+|p(u,u)| \\
              & < r+\varepsilon+a \\
              & < M
    \end{split}
\end{equation*}
So, $ n \in \{ n \in \mathbb N : p(x_n,u) < M \} $.
This implies $ K^c \subset \{ n \in \mathbb N : p(x_n,u) < M \}$. \\
So, $ \{ n \in \mathbb N : p(x_n,u) \geq M \} \subset K $.
Since $d(K)=0$, $d(\{ n \in \mathbb N : p(x_n,u) \geq M \})=0$. \\
Hence $\{x_{n}\}$ is statistically bounded. 
\end{proof}

\begin{theorem}
    Let $\{ x_{{n}_{k}}\}$ be a subsequence of $\{x_{n}\}$ such that $d(\{ n_1, n_2,....... \})=1$, then $st-LIM^{r}x_{n} \subseteq st-LIM^{r}x_{n_{k}}$. 
\end{theorem}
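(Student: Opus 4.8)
\section*{Proof proposal}

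The plan is to fix an arbitrary element $x \in st-LIM^{r}x_{n}$ --- if this set is empty there is nothing to prove --- and to check directly from the density definition that $x \in st-LIM^{r}x_{n_{k}}$. So fix $\varepsilon > 0$, put $A = \{ n \in \mathbb{N} : |p(x_{n},x)-p(x,x)| \geq r+\varepsilon \}$, which has $d(A)=0$ by assumption, and write $K = \{ n_{1} < n_{2} < \cdots \}$ for the index set of the given subsequence, which has $d(K)=1$ by hypothesis. The quantity to be controlled is the density of $B = \{ k \in \mathbb{N} : |p(x_{n_{k}},x)-p(x,x)| \geq r+\varepsilon \}$, and the starting point is the tautology $k \in B \Longleftrightarrow n_{k} \in A$.

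The heart of the argument is a change of the counting index from $k$ to $n$. Because $n_{1} < n_{2} < \cdots < n_{m}$, the first $m$ entries of $K$ are precisely the members of $K$ not exceeding $n_{m}$; hence $m = |K_{n_{m}}|$, and intersecting with $A$ gives $|B_{m}| = |(A\cap K)_{n_{m}}| \leq |A_{n_{m}}|$. Dividing by $m$,
\[
0 \leq \frac{|B_{m}|}{m} \leq \frac{|A_{n_{m}}|}{|K_{n_{m}}|} = \frac{|A_{n_{m}}|}{n_{m}}\cdot\frac{n_{m}}{|K_{n_{m}}|}.
\]
Now let $m \to \infty$, so that $n_{m} \to \infty$. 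Then $|A_{n_{m}}|/n_{m} \to 0$, being a subsequence of $\big(|A_{n}|/n\big)_{n}$, which converges to $d(A)=0$; and $|K_{n}|/n \to d(K)=1$ forces $n_{m}/|K_{n_{m}}| \to 1$. By the squeeze principle $|B_{m}|/m \to 0$, i.e. $d(B)=0$. Since $\varepsilon>0$ was arbitrary, $x \in st-LIM^{r}x_{n_{k}}$, and therefore $st-LIM^{r}x_{n} \subseteq st-LIM^{r}x_{n_{k}}$.

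The genuinely routine parts are the set-theoretic bookkeeping that identifies $\{ n_{k} : k \leq m \}$ with $K_{n_{m}}$ and the monotonicity $|(A\cap K)_{n_{m}}| \leq |A_{n_{m}}|$. The only step needing a little care --- and the main, if mild, obstacle --- is transferring density zero from the variable $n$ to the variable $k$; this is exactly where the hypothesis $d(K)=1$ enters, since it yields $m/n_{m}\to 1$ so that the reindexing does not distort densities. No partial metric axiom is used beyond what is already built into the definition of $st-LIM^{r}$, so the argument is purely about natural density.
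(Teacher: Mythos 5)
Your proof is correct, but it takes a genuinely different route from the paper's. The paper argues at the level of subsets of $\mathbb{N}$ made up of the \emph{values} $n_k$: it shows $\{ n_k \in K : |p(x_{n_k},x)-p(x,x)| \geq r+\varepsilon \} \subset A \cup K^{c}$ and concludes via subadditivity, $d(A \cup K^{c}) \leq d(A)+d(K^{c})=0$. You instead work directly with the \emph{index} variable $k$: using $m=|K_{n_m}|$ and $|B_m| \leq |A_{n_m}|$ you squeeze $|B_m|/m \leq (|A_{n_m}|/n_m)\cdot(n_m/|K_{n_m}|) \to 0\cdot 1$, so the exceptional set $B$ of $k$-indices itself has density zero. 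This reindexing is exactly the point the paper leaves implicit: the definition of $x \in st-LIM^{r}x_{n_k}$ requires density computed in the $k$-variable, whereas the paper's containment only controls the density of the corresponding set of values $n_k$ inside $\mathbb{N}$; passing from the latter to the former is precisely where $d(K)=1$ must be used quantitatively (via $n_m/|K_{n_m}| \to 1$), not merely through $d(K^c)=0$. So your argument is slightly longer but more rigorous on the decisive step, while the paper's is shorter at the cost of conflating the two notions of density; both use only properties of natural density and no partial-metric axioms, as you note.
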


\begin{proof}
Let  $\{ x_{{n}_{k}}\}$ be a subsequence of $\{x_{n}\}$ and $ x \in st-LIM^{r}x_{n}$ and let $ \varepsilon >0$. 
So the set $A=\{ n \in \mathbb N : |p(x_n,x)-p(x,x)| \geq r+\varepsilon \}$ has density zero. 
So, $d(A^c)=1$. 
Since the set $K= \{ n_1, n_2,....... \}$ has density 1, $A^c \cap K \neq \phi$.
For if $A^c \cap K= \phi$, then $K \subset A$ and so $d(K)=0$, since $d(A)=0$. But $d(K)=1$. Therefore, $A^c \cap K \neq \phi$.
Let $ n_q \in A^c \cap K $. Then $ |p(x_{{n}_{q}},x)-p(x,x)| < r+\varepsilon $ i.e. $ n_q \in \{ n_k \in K: |p(x_{{n}_{k}},x)-p(x,x)| < r+\varepsilon  \}$. 
So, $ \{ n_k \in K: |p(x_{{n}_{k}},x)-p(x,x)| \geq r+\varepsilon  \} \subset A \cup K^c$. This implies that $ d( \{ n_k \in K: |p(x_{{n}_{k}},x)-p(x,x)| \geq r+\varepsilon  \} )=0$, since $d(A \cup K^c) \leq d(A)+d(K^c)=0+0=0$. 
Therefore $ x \in st-LIM^{r}x_{n_{k}}$. Hence $st-LIM^{r}x_{n} \subseteq st-LIM^{r}x_{n_{k}}$.
\end{proof}

\begin{theorem}
Let $\{x_{n}\}$ and $\{y_{n}\}$ be two sequences in $(X, p)$ such that $p(x_{n}, y_{n}) \longrightarrow 0$ as $n \longrightarrow \infty$.
 If $\{x_{n}\}$ is $r$-statistical convergent to $x$ and $p(x_{n}, x_{n}) \longrightarrow 0$ as $n \longrightarrow \infty$, then $\{y_{n}\}$ is $r$-statistical convergent to $x$. 
 Conversely, if $\{y_{n}\}$ is $r$-statistical convergent to $y$ and $p(y_{n},y_{n})\longrightarrow 0 $ as $ n \longrightarrow \infty $, then $\{x_{n}\}$ is $r$-statistical convergent to $y$.  
\end{theorem}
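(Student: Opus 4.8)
The plan is to reduce both implications to one elementary estimate. Applying the partial metric inequality $(p4)$ to the triple $y_n,x_n,x$ gives $p(y_n,x)\le p(y_n,x_n)+p(x_n,x)-p(x_n,x_n)$, so by $(p3)$ and $(p1)$ we get $p(y_n,x)-p(x_n,x)\le p(x_n,y_n)$; applying $(p4)$ to the triple $x_n,y_n,x$ gives symmetrically $p(x_n,x)-p(y_n,x)\le p(x_n,y_n)$. Hence
\[
|p(y_n,x)-p(x_n,x)|\le p(x_n,y_n)\qquad\text{for every }n,
\]
and in the same way $|p(x_n,y)-p(y_n,y)|\le p(x_n,y_n)$ for every $n$. (Note that by $(p1)$ the hypothesis $p(x_n,y_n)\to 0$ already forces $p(x_n,x_n)\to 0$ and $p(y_n,y_n)\to 0$, so the self-distance assumptions are really there only to keep the bookkeeping clean.)

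For the direct part I would fix $\varepsilon>0$, choose $N\in\mathbb N$ with $p(x_n,y_n)<\varepsilon/2$ for all $n\ge N$ (possible since $p(x_n,y_n)\to 0$), and use that $x_n\stackrel{r-st}{\longrightarrow}x$ to obtain $d(A)=0$ for $A=\{n\in\mathbb N:|p(x_n,x)-p(x,x)|\ge r+\varepsilon/2\}$. If $n\ge N$ and $n\notin A$, then by the triangle inequality for the absolute value together with the estimate above,
\[
|p(y_n,x)-p(x,x)|\le |p(y_n,x)-p(x_n,x)|+|p(x_n,x)-p(x,x)|<\tfrac{\varepsilon}{2}+\bigl(r+\tfrac{\varepsilon}{2}\bigr)=r+\varepsilon .
\]
Consequently $\{n\in\mathbb N:|p(y_n,x)-p(x,x)|\ge r+\varepsilon\}\subseteq A\cup\{1,2,\dots,N-1\}$, which is the union of a density-zero set and a finite set and therefore has natural density $0$. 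Since $\varepsilon>0$ was arbitrary, $y_n\stackrel{r-st}{\longrightarrow}x$.

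The converse is obtained by running exactly the same argument with the roles of $\{x_n\}$ and $\{y_n\}$ (and of $x$ and $y$) interchanged: one uses $p(x_n,y_n)=p(y_n,x_n)\to 0$, the estimate $|p(x_n,y)-p(y_n,y)|\le p(x_n,y_n)$, and the hypothesis $y_n\stackrel{r-st}{\longrightarrow}y$ to show that for each $\varepsilon>0$ the set $\{n\in\mathbb N:|p(x_n,y)-p(y,y)|\ge r+\varepsilon\}$ lies inside the union of a density-zero set and a finite set. Thus the whole proof is driven by the single inequality of the first paragraph; once that is in hand, all that remains are the routine natural-density manipulations already used in the proofs of Theorem 2.4 and Theorem 2.6, and I do not expect any genuine obstacle beyond setting up the $(p4)$-estimate correctly.
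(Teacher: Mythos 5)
Your proof is correct and follows essentially the same route as the paper's: apply the triangularity axiom $(p4)$ to the triple $y_n, x_n, x$, split $\varepsilon$, and absorb the exceptional indices into the union of a density-zero set and a finite set. Your packaging of the estimate as the two-sided inequality $|p(y_n,x)-p(x_n,x)|\le p(x_n,y_n)$ (with an $\varepsilon/2$ split instead of the paper's $\varepsilon/3$) is slightly cleaner, and your observation that $(p1)$ makes the hypotheses $p(x_n,x_n)\to 0$ and $p(y_n,y_n)\to 0$ automatic is a valid small sharpening of the statement.
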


\begin{proof}
 Let $\{x_{n}\}$ be $r$-statistical convergent to $x$ and let $\varepsilon>0$ be arbitrary. 
 So, for $\varepsilon>0$, $d(A)=0$, where $A=\{n \in \mathbb N: |p(x_n,x)-p(x,x)| \geq r+\frac{\varepsilon}{3} \}$. \\
 Again, since $p(x_{n}, y_{n}) \longrightarrow 0$ as $n \longrightarrow \infty$, for $\varepsilon>0$, $ \exists \ k_1 \in \mathbb N$ such that $p(x_{n}, y_{n})\leq \frac{\varepsilon}{3}$, when $n \geq k_1$ ........(1) \\
Since $p(x_{n}, x_{n}) \longrightarrow 0$ as $n \longrightarrow \infty$, $ \exists \ k_2 \in \mathbb N$ such that $p(x_{n}, x_{n})\leq \frac{\varepsilon}{3}$, when $n \geq k_2$...........(2) \\
Let $ K= \ max \{ k_1, k_2 \}$. Then for $n \geq k$, (1) and (2) both hold. \\
We can write $p(y_{n},x) \leq p(y_{n},x_{n})+p(x_{n},x)-p(x_{n},x_{n})$.
So, $p(y_{n},x) - p(x,x) \leq p(y_{n},x_{n}) + p(x_{n},x) - p(x_{n},x_{n}) - p(x,x)$.\\
Since $d(A)=0$, $d(A^c)=1$.
Since $ d( \{1,2,......k \})=0$, so $ d( \{1,2,......k \}^c)=1$. 
So, $A^c \cap \{1,2,......k \}^c \neq \phi $.
Therefor, if $ n \in A^c \cap \{1,2,......k \}^c $, then
\begin{equation*}
    \begin{split}
|p(y_{n},x) - p(x,x)|& = |p(y_{n},x) - p(x,x)| \\
    & \leq |p(y_{n},x_{n}) + p(x_{n},x) - p(x_{n},x_{n}) - p(x,x)| \\
    &\leq |p(x_{n},y_{n})| + |p(x_{n},x) - p(x,x)| + |p(x_{n},x_{n})|\\
    & < \frac{\varepsilon}{3} + (r+ \frac{\varepsilon}{3})+\frac{\varepsilon}{3} \\
    &  = r + \varepsilon 
    \end{split}
\end{equation*}
This implies that $ A^c \cap \{1,2,......k \}^c \subset \{n \in \mathbb N : |p(y_{n},x) - p(x,x)| < r+\varepsilon \} $ and hence $ \{n \in \mathbb N : |p(y_{n},x) - p(x,x)| \geq r+\varepsilon \} \subset (A^c \cap \{1,2,......k \}^c)^c = A \cup \{1,2,......k \} $.
Since $ d(A \cup \{1,2,......k \}) \leq d(A)+d(\{1,2,......k \}) = 0+0=0$, it follows that 
 $d(\{n \in \mathbb N: |p(y_{n},x) - p(x,x)| \geq r+\varepsilon \})=0$.
Therefore, $\{y_{n}\}$ is $r$-statistical convergent to $x$.

Converse part is similar.
\end{proof}

\begin{theorem}
Let $\{x_{n}\}$ and $\{y_{n}\}$ be two sequences in $(X, p)$ such that $p(x_{n}, y_{n}) \longrightarrow 0$ as $ n \longrightarrow \infty$. 
If $\{x_{n}\}$ is $r$-statistical convergent to $x$ and a positive number $c$ such that $ p(x_{n}, x_{n}) \leq c$ for all $n$ (i.e. self-distance of the sequence is less or equal to $c$), then  $\{y_{n}\}$ is $(r + c) $-statistical convergent to $x$. 
Conversely, if $\{y_{n}\}$ is $r$-statistical convergent to $y$ and a positive number $d$ such that $p(y_{n}, y_{n})\leq d$ for all $n$, then $\{x_{n}\}$ is $(r + d)$-statistical convergent to $y$.      
\end{theorem}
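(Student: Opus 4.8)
The plan is to follow the template of the proof of Theorem 2.8, the only new ingredient being that the self-distances $p(x_n,x_n)$ are no longer forced to $0$ but merely bounded by the constant $c$, so that this surplus gets absorbed into the roughness degree. Throughout I would work with the forward implication and then note that the converse is obtained by symmetry.

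First I would fix an arbitrary $\varepsilon>0$. Since $\{x_n\}$ is $r$-statistically convergent to $x$, the set $A=\{n\in\mathbb N:|p(x_n,x)-p(x,x)|\geq r+\frac{\varepsilon}{2}\}$ has natural density $0$. Since $p(x_n,y_n)\longrightarrow 0$, there is $k\in\mathbb N$ with $p(x_n,y_n)\leq\frac{\varepsilon}{2}$ for all $n\geq k$; the set $F=\{1,2,\dots,k-1\}$ being finite, $d(F)=0$, hence $d(A^c\cap F^c)=1$ and in particular $A^c\cap F^c\neq\phi$.

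The core estimate would be carried out for an arbitrary $n\in A^c\cap F^c$: by the triangle axiom $(p4)$ together with symmetry $(p3)$ one has $p(y_n,x)\leq p(x_n,y_n)+p(x_n,x)-p(x_n,x_n)$, and therefore
\[
p(y_n,x)-p(x,x)\leq p(x_n,y_n)+\{p(x_n,x)-p(x,x)\}+p(x_n,x_n)<\frac{\varepsilon}{2}+\left(r+\frac{\varepsilon}{2}\right)+c=(r+c)+\varepsilon ,
\]
where $n\in A^c$ controls the middle term and the last term is handled by the deliberately lossy bound $-p(x_n,x_n)\leq p(x_n,x_n)\leq c$. By $(p1)$ we have $p(x,x)\leq p(x,y_n)=p(y_n,x)$, so the left-hand side above already equals $|p(y_n,x)-p(x,x)|$. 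Thus $A^c\cap F^c\subseteq\{n\in\mathbb N:|p(y_n,x)-p(x,x)|<(r+c)+\varepsilon\}$, and passing to complements gives $\{n\in\mathbb N:|p(y_n,x)-p(x,x)|\geq(r+c)+\varepsilon\}\subseteq A\cup F$, a set of natural density $0$. Hence $\{y_n\}$ is $(r+c)$-statistically convergent to $x$.

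For the converse I would run the same argument verbatim with $\{x_n\}$ and $\{y_n\}$ interchanged, $c$ replaced by $d$, and $p(x_n,y_n)=p(y_n,x_n)$ again tending to $0$. I do not expect a genuine obstacle here: this is essentially the proof of Theorem 2.8, the only change being that the term $p(x_n,x_n)$ is now controlled by the constant $c$ rather than made arbitrarily small. The two points that need care are the lossy replacement $-p(x_n,x_n)\leq p(x_n,x_n)\leq c$ — which is precisely what generates the extra roughness $c$ — and the bookkeeping of splitting $\varepsilon$ into two halves instead of the three pieces used in Theorem 2.8.
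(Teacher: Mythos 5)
Your proof is correct and is exactly the argument the paper intends: the paper omits the proof of this theorem, saying only that it is parallel to Theorem 2.8, and your write-up is that parallel argument with the self-distance term $p(x_n,x_n)$ bounded by $c$ and absorbed into the roughness degree instead of being made small. As a side remark, since $-p(x_n,x_n)\leq 0$ you could simply drop that term in your key estimate and conclude $r$-statistical convergence of $\{y_n\}$ to $x$, which already implies the stated $(r+c)$-statistical convergence, so the lossy replacement by $+c$ is not actually needed.
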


\begin{proof}
The proof is parallel to the proof of the above theorem and so is omitted. 
\end{proof}

\begin{definition}
Let $(X, p)$ be a partial metric space. Then $c \in X$ is called a statistical cluster point of a sequence $\{x_{n}\}$ in $(X, p)$ if for every $\varepsilon>0$, $d(\{n \in \mathbb N: |p(x_n,c)-p(c,c)| < \varepsilon \}) \neq 0$.
\end{definition}

\begin{theorem}
 Let $(X, p)$ be partial metric space and $a$ be a real constant such that $p(x,x)=a$, $\forall x \in X$. Let $\{x_{n}\}$ be a sequence in $(X, p)$. If $c$ is a cluster point of $\{x_{n}\}$, then $st-LIM^{r}x_{n}  \subset \overline{B^{p}_{r}}(c)$ for some $r >0$.
\end{theorem}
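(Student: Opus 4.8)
The plan is to establish the inclusion pointwise. I fix an arbitrary $y \in st-LIM^{r}x_{n}$ and show that $p(c,y) \le p(c,c)+r$, which is precisely the condition for $y \in \overline{B^{p}_{r}}(c)$; if $st-LIM^{r}x_{n}=\phi$ there is nothing to prove, so I assume it is nonempty. The engine of the argument is the triangularity axiom $(p4)$, evaluated at a single index $m$ chosen so that the term $x_{m}$ is simultaneously $r$-statistically close to $y$ and statistically close to $c$.

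So let $\varepsilon>0$ be arbitrary. Since $y \in st-LIM^{r}x_{n}$ and $p(y,y)=a$, the set $A=\{n \in \mathbb N : |p(x_{n},y)-p(y,y)| \ge r+\varepsilon\}$ has $d(A)=0$; as $(p1)$ gives $p(x_{n},y) \ge p(y,y)$, I may drop the modulus and rewrite $A^{c}=\{n \in \mathbb N : p(x_{n},y) < a+r+\varepsilon\}$, so $d(A^{c})=1$. Since $c$ is a statistical cluster point of $\{x_{n}\}$ and $p(c,c)=a$, the set $B=\{n \in \mathbb N : |p(x_{n},c)-p(c,c)| < \varepsilon\}=\{n \in \mathbb N : p(x_{n},c) < a+\varepsilon\}$ has $d(B) \neq 0$.

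I then claim $A^{c}\cap B \neq \phi$: if $A^{c}\cap B=\phi$, then $B \subseteq A$, and since $d(A)=0$ this forces $d(B)=0$, contradicting $d(B)\neq 0$. Choose $m \in A^{c}\cap B$; then $p(x_{m},y) < a+r+\varepsilon$, $p(x_{m},c) < a+\varepsilon$, and $p(x_{m},x_{m})=a$, so by $(p4)$ and the symmetry $(p3)$,
\[
p(c,y) \le p(c,x_{m})+p(x_{m},y)-p(x_{m},x_{m}) < (a+\varepsilon)+(a+r+\varepsilon)-a = a+r+2\varepsilon .
\]
Since $\varepsilon>0$ was arbitrary, $p(c,y) \le a+r = p(c,c)+r$, that is $y \in \overline{B^{p}_{r}}(c)$. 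As $y$ was an arbitrary point of $st-LIM^{r}x_{n}$, I conclude $st-LIM^{r}x_{n} \subseteq \overline{B^{p}_{r}}(c)$.

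The only step requiring any care is the non-emptiness of $A^{c}\cap B$: one has to note that $B$, for which $d(B)\neq 0$, cannot be buried inside the density-zero set $A$. Everything after that is the single application of $(p4)$ displayed above, which works out cleanly precisely because the hypothesis $p(x,x)=a$ makes every self-distance cancel; so I anticipate no further difficulty.
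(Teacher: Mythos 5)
Your proof is correct and follows essentially the same route as the paper's: both arguments hinge on choosing an index in the intersection of the density-one set coming from $y \in st-LIM^{r}x_{n}$ and the non-density-zero set coming from the statistical cluster point $c$, and then applying axiom $(p4)$ with the constant self-distance $a$ cancelling. The only difference is organizational: the paper fixes a particular $\varepsilon$ extracted from an assumed violation $p(c,y)>a+r$ and derives a contradiction via disjoint balls $B_{r+\varepsilon}(y)$ and $B_{\varepsilon}(c)$, whereas you run the same estimate directly for arbitrary $\varepsilon>0$ and conclude $p(c,y)\leq a+r$ by letting $\varepsilon$ tend to $0$.
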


\begin{proof}
If possible suppose that $y \in st-LIM^{r}x_{n}$ but $y \notin \overline{B^{p}_{r}}(c)=\{y \in X: p(c,y) \leq p(c,c) + r\} =\{y \in X: p(c,y) \leq a + r\}$.
So, $a + r < p(c,y)$.
Let $\varepsilon^{'}= p(c,y) - (a+r)$, so that $p(c,y)= \varepsilon^{'} + a + r$, where $\varepsilon^{'} > 0$.
Choose $\varepsilon =\frac{\varepsilon^{'}}{2}$ and so we can write 
$p(c,y)= 2\varepsilon + a + r$.
Then $B_{r+\varepsilon}(y) \cap B_{\varepsilon}(c) = \phi$.
[ For, if $ q \in B_{r+\varepsilon}(y) \cap B_{\varepsilon}(c)$,
then it would imply that $p(q,y) < p(y,y) + r + \varepsilon = a + r + \varepsilon$  and $p(q,c) < p(c,c) + \varepsilon = a + \varepsilon$ which in turn implies that
 $p(c,y) \leq p(c,q) + p(q,y) - p(q,q) < \{a+\varepsilon \} + \{a+r+\varepsilon \} -a = a+r+2\varepsilon = p(c,y)$, a contradiction ].
Therefore, $B_{r+\varepsilon}(y) \cap B_{\varepsilon}(c) = \phi$.
But since $y \in st-LIM^{r}x_{n}$, for $\varepsilon > 0$, $d(A_1)=0$, where $A_1=\{n \in \mathbb{N}: |p(x_{n},y)-p(y,y)| \geq r+\varepsilon \}$.
Again, since $c$ is a cluster point of $\{x_{n}\}$, for the same $\varepsilon > 0$, $d(A_2) \neq 0$, where $A_2=\{n \in \mathbb{N}: |p(x_{n},c)-p(c,c)| < \varepsilon \}$.
Now, let $k \in A_{1}^c \cap A_{2}$, then 
 $|p(x_{k},c)-p(c,c)| < \varepsilon$. 
 This implies that $p(x_{k},c) - p(c,c) < \varepsilon$. So, $p(c,x_{k}) < p(c,c) + \varepsilon$. Hence $ x_{k} \in B_{\varepsilon}(c)$.
Also, $|p(x_{k},y)-p(y,y)| < r+\varepsilon$.
So, $p(x_{k},y)-p(y,y) < r+\varepsilon$. This implies that $p(y,x_{k}) < p(y,y) + r + \epsilon$. Hence $ x_{k} \in B_{r+\varepsilon}(y)$.
So, $ x_{k} \in B_{r+\varepsilon}(y) \cap B_{\varepsilon}(c)$, which is a contradiction.
Hence  $y \in \overline{B^{p}_{r}}(c)$.
\end{proof}

\subsection*{Acknowledgements}
The first author is thankful to The University of Burdwan for the grant of Senior Research Fellowship (State Funded) during the preparation of this paper. Both authors are also thankful to DST, Govt of India for providing FIST project to the dept. of Mathematics, B.U. \\

\end{document}